\newtheorem{thm}{Theorem}       
\let\paragraph\subsection
\title{More on Poincare-Hopf and Gauss-Bonnet}
\author{Oliver Knill} \date{12/1/2019, Last update 12/21/2019}
\address{Department of Mathematics \\ Harvard University \\ Cambridge, MA, 02138 }
\subjclass{ 05C10, 57M15,03H05, 62-07, 62-04 }
\begin{document}

\begin{abstract}
We illustrate connections between differential geometry on finite simple graphs 
$G=(V,E)$ and Riemannian manifolds $(M,g)$. The link is that curvature can be defined 
integral geometrically as an expectation in a probability space 
of Poincar\'e-Hopf indices of coloring or Morse functions. 
Regge calculus with an isometric Nash embedding links then the Gauss-Bonnet-Chern integrand
of a Riemannian manifold with the graph curvature. There is also a 
direct nonstandard approach \cite{Nelson77}: if $V$ is a finite set 
containing all standard points 
of $M$ and $E$ contains pairs which are closer than some positive number.
One gets so finite simple graphs $(V,E)$ which leads to the standard curvature.
The probabilistic approach is an umbrella framework which covers discrete 
spaces, piecewise linear spaces, manifolds or varieties.
\end{abstract} 
\maketitle

\section{Poincar\'e-Hopf}

\paragraph{}
For a finite simple digraph $(V,E)$ with no triangular cycles,
we can define the {\bf index} $i(v) = 1-\chi(S^-(v))$, where $S^-(v)$ is
the graph generated by all vertices pointing towards $v$ and 
where the {\bf Euler characteristic} $\chi(G)=\sum_{x \subset G} \omega(x)$ sums
$\omega=(-1)^{{\rm dim}(x)}$ over the set $G$ of all complete subgraphs $x$. 
As usual, we identify here $G$ with the Whitney complex defined by $(V,E)$. 
The following result appeared already in \cite{PoincareHopfVectorFields} and
is a discrete analog of \cite{poincare85,hopf26,Spivak1999}. 

\begin{thm}[Poincar\'e-Hopf for digraphs]
$\sum_{ v \in V} i(v) = \chi(G)$.
\end{thm}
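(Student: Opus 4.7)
The plan is to prove the identity by a direct counting argument: partition the set of all complete subgraphs (simplices) of $G$ according to a canonical ``sink'' vertex, then recognize that the contribution of each block to $\chi(G)$ is exactly $i(v)$.

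First I would establish the structural fact that drives everything: because the digraph has no triangular (i.e.\ $3$-) cycles, every complete subgraph $x$ of $G$ inherits a tournament orientation with no $3$-cycle, and such a tournament is necessarily transitive. Hence $x$ has a unique \emph{sink}, the vertex $f(x) \in x$ towards which every other vertex of $x$ points. This gives a well-defined map $f: G \setminus \{\emptyset\} \to V$.

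Next I would partition $\chi(G) = \sum_{x} \omega(x)$ along the fibers of $f$:
\[
\chi(G) \;=\; \sum_{v \in V} \sum_{\substack{x \in G \\ f(x)=v}} \omega(x).
\]
For fixed $v$, the simplices $x$ with $f(x)=v$ are exactly those of the form $x = \{v\} \cup y$, where $y$ is either empty or a simplex of the Whitney complex of $S^{-}(v)$ (all vertices of $y$ point to $v$ and are mutually connected in $G$, so they form a simplex in $S^{-}(v)$; conversely any such $y$ produces a valid $x$ with sink $v$ since no new $3$-cycle is created). Because $\dim(\{v\}\cup y) = \dim(y)+1$, we get $\omega(\{v\}\cup y) = -\omega(y)$, so
\[
\sum_{f(x)=v} \omega(x) \;=\; \omega(\{v\}) \;-\; \sum_{y \in S^{-}(v)} \omega(y) \;=\; 1 - \chi(S^{-}(v)) \;=\; i(v).
\]
Summing over $v$ yields the theorem.

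The main obstacle is the structural lemma in the first step: making sure that the assumption ``no triangular cycles'' really forces every clique to carry a transitive (acyclic) tournament, so that the sink function $f$ is well-defined on \emph{all} simplices, not merely on edges and triangles. The cleanest way is induction on $|x|$, reducing a would-be directed cycle in a larger clique to a directed $3$-cycle on three of its vertices. Once $f$ is in hand, the rest is a bookkeeping computation, and the appearance of the sign flip $\omega(\{v\}\cup y) = -\omega(y)$ is exactly what converts the Euler characteristic of the ``incoming'' subgraph into the index $1-\chi(S^{-}(v))$.
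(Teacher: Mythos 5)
Your proof is correct and follows essentially the same route as the paper: the paper defines the map $F$ sending each simplex to its maximal element (your ``sink'') and observes that $i$ is the push-forward $F_*\omega$ of the signed measure $\omega$, which is exactly your fiber-by-fiber computation $\sum_{f(x)=v}\omega(x)=1-\chi(S^-(v))=i(v)$. The only difference is that you spell out the transitive-tournament lemma and the sign bookkeeping explicitly, which the paper leaves implicit.
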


\begin{figure}[!htpb]
\scalebox{0.3}{\includegraphics{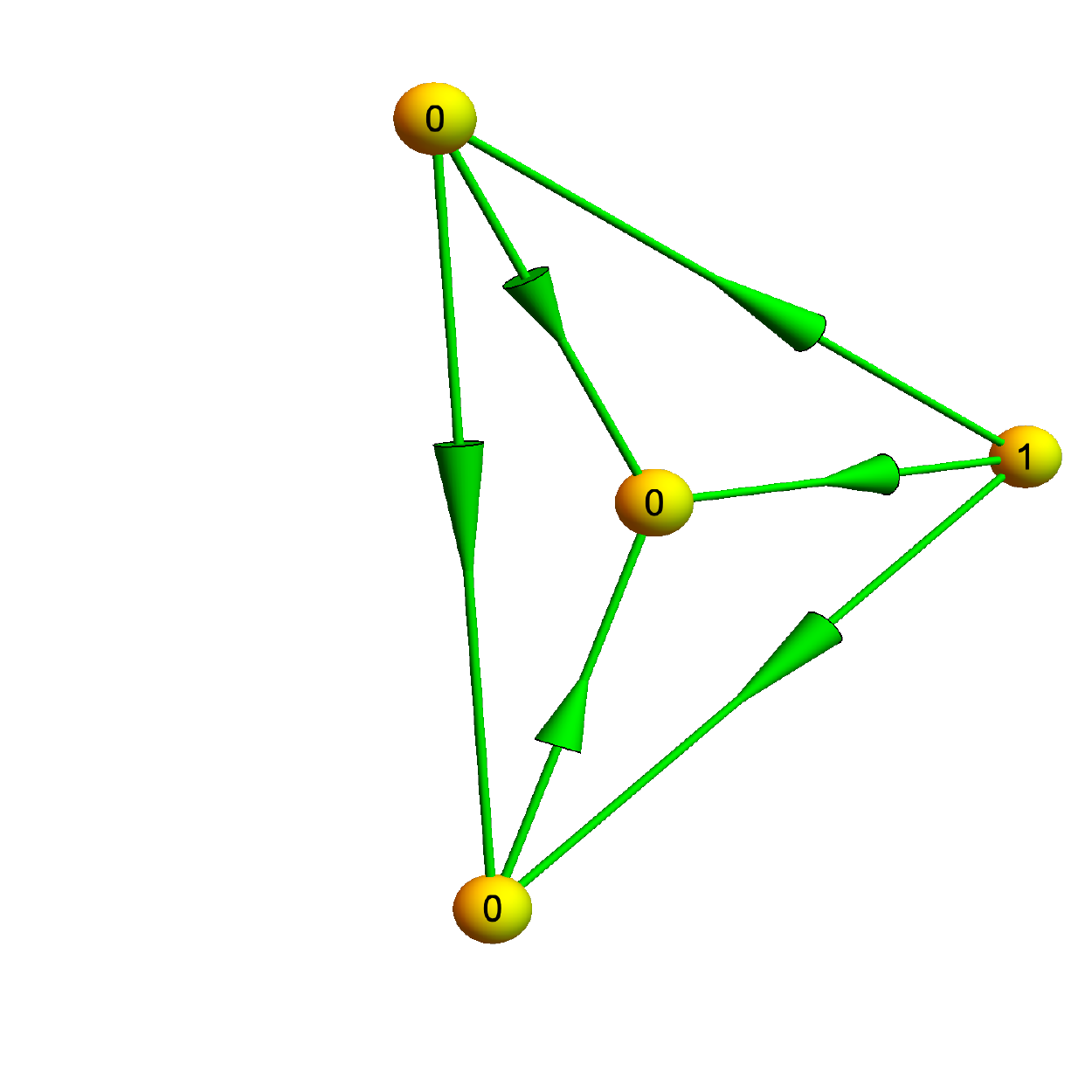}}
\scalebox{0.3}{\includegraphics{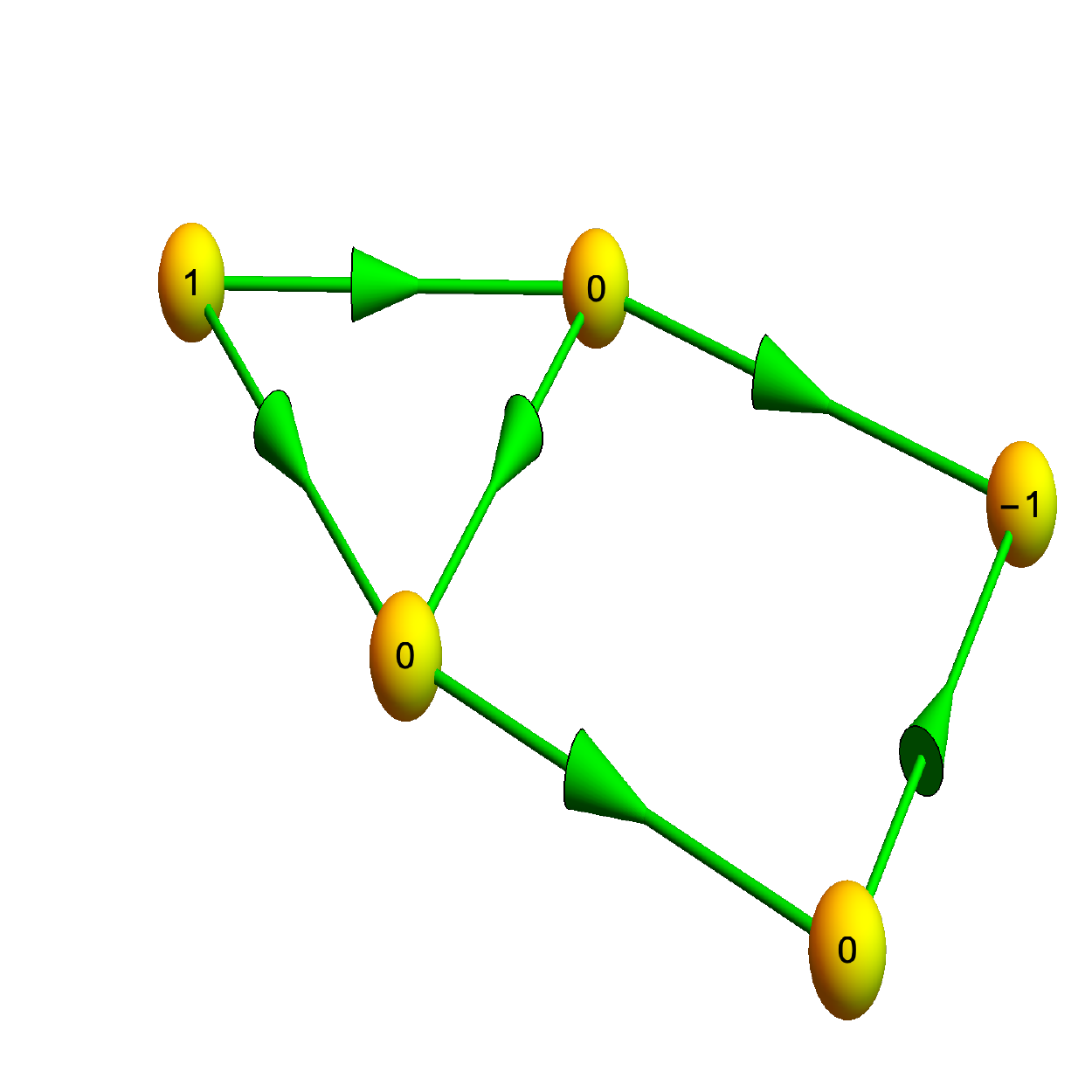}}
\scalebox{0.3}{\includegraphics{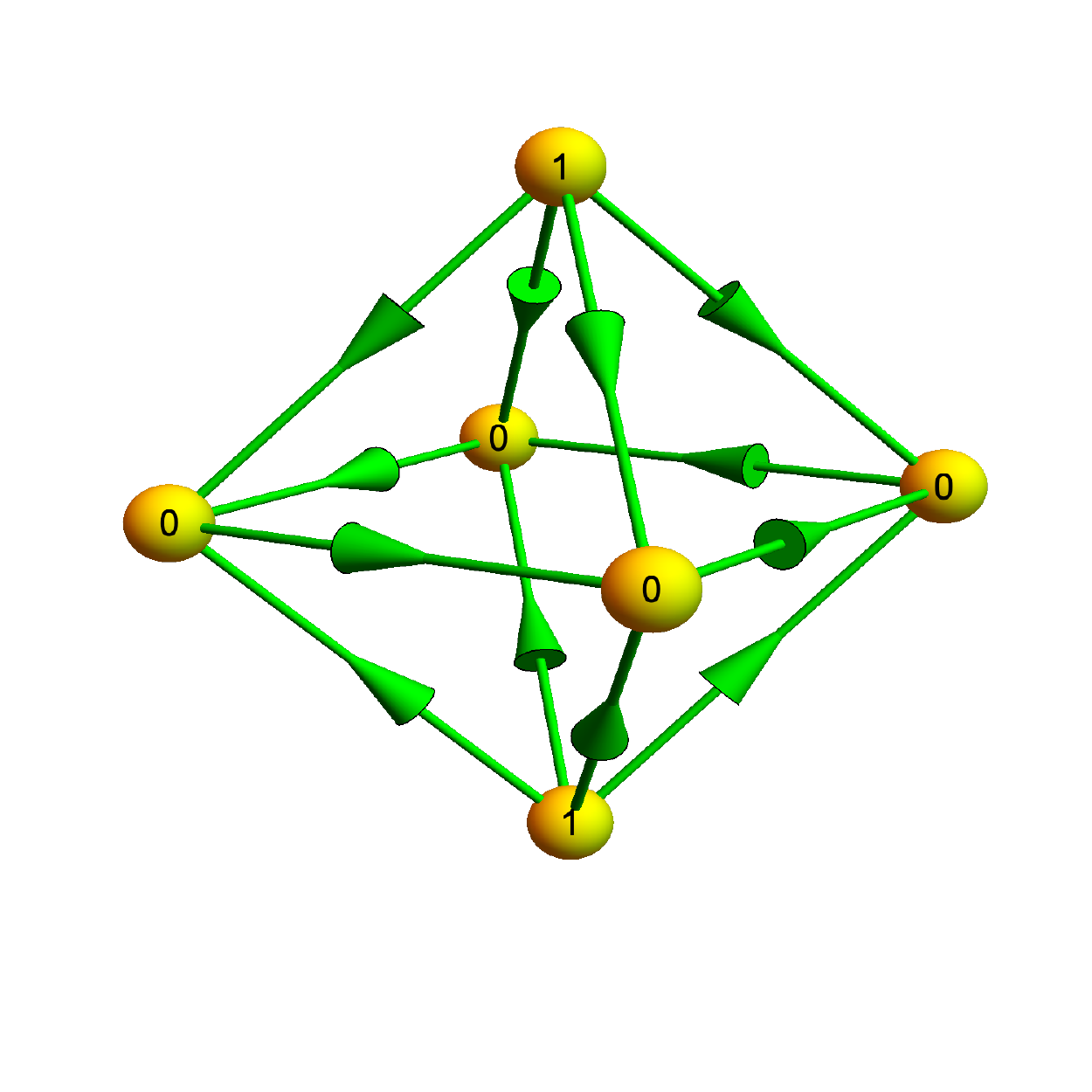}}
\label{triangle}
\caption{
Three directed graphs without circular triangles. The indices
on the vertices add up to the Euler characteristic.
}
\end{figure}

\begin{proof}
Because cyclic triangles are absent in $G$, the digraph structure defines a total 
order on each simplex $x$. Let $v=F(x)$ be the maximal element on $x$,
defining so a map $F$ from the simplicial complex $G$ to $V$. 
Because the push-forward of the signed measure $\omega(x)=(-1)^{\rm dim}(x)$ from
$G$ to $V$ is $i = F_* \omega$, we have $i(V)=\omega(G)$. 
\end{proof}

\paragraph{}
The result generalizes to functions \cite{parametrizedpoincarehopf}. 
If $f_G(t) = 1+f_0 t + \cdots + f_d t^{d+1}$
is the $f$-function of $G$, the generating function of the $f$-vector 
of $G$, then 

\begin{thm}
$f_G(t) = 1+t \sum_{v \in V} f_{S_-(v)}(t)$.
\end{thm}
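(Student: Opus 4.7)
The plan is to extract from the previous proof the bijective content and promote it from a numerical identity (the Euler characteristic identity) to an identity of generating polynomials. As before, the absence of cyclic triangles makes the digraph structure a total order on each simplex, so every simplex $x$ of $G$ has a well-defined maximal vertex $F(x)\in V$, and $F\colon G \to V$ partitions the non-empty simplices of $G$ according to their maximum.

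First, I would describe the fiber $F^{-1}(v)$ explicitly: a simplex $x$ lies in $F^{-1}(v)$ if and only if $v\in x$ and every other vertex of $x$ points to $v$, i.e.\ $x = \{v\}\cup y$ for some (possibly empty) simplex $y$ of $S_-(v)$. Deleting $v$ gives a bijection between $F^{-1}(v)$ and the set of \emph{all} simplices of $S_-(v)$ (including the empty one), and this bijection lowers dimension by exactly one; in particular the singleton $\{v\}$ corresponds to the empty simplex of $S_-(v)$, counted by $f_{-1}=1$.

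Next I would translate this bijection to generating functions. Multiplying $f_{S_-(v)}(t)$ by $t$ implements the dimension shift $k-1 \mapsto k$, so $t\,f_{S_-(v)}(t)$ is the generating function of $F^{-1}(v)$ by dimension. Summing over $v\in V$ enumerates all non-empty simplices of $G$, and adding $1$ for the empty simplex of $G$ yields $f_G(t)$. This is precisely the claimed identity
\[
f_G(t) = 1 + t\sum_{v\in V} f_{S_-(v)}(t).
\]

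The main obstacle is not conceptual but bookkeeping: one has to check that the constant term $1$ on each side really matches the empty simplex, and that the linear coefficient $|V|\,t = f_0\,t$ is correctly produced by the $|V|$ summands each contributing $t\cdot f_{-1}(S_-(v))\cdot t^{0}$. Apart from this off-by-one check, the theorem falls out directly from the same $F$ that proved Theorem 1; indeed, setting $t=-1$ and noting $\chi = -f_G(-1)$ with an appropriate sign convention recovers Theorem 1 as a special case, which is a useful sanity check to perform at the end.
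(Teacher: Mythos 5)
Your proof is correct and is essentially the paper's own argument: it refines the pushforward map $F$ (sending each nonempty simplex to its maximal vertex, well defined because there are no cyclic triangles) from the numerical identity $i=F_*\omega$ used for Theorem~1 to the dimension-graded bijection $x \mapsto x\setminus\{F(x)\}$ between $F^{-1}(v)$ and all simplices (including the empty one) of $S_-(v)$. The only slip is in your closing sanity check: with the convention $f_G(t)=1+f_0t+\cdots+f_dt^{d+1}$ one has $\chi(G)=1-f_G(-1)$ rather than $-f_G(-1)$, and with that correction the specialization at $t=-1$ does recover Theorem~1.
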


\paragraph{}
The integrated version produces a Gauss-Bonnet version
\cite{dehnsommervillegaussbonnet}. 
If $F_G(t)=\int_0^t f_G(s) \; ds$ denote the anti-derivative of $f_G$,
the curvature valuation to $f$ is the anti-derivative of $f$ evaluated on the unit sphere.
The functional form of the Gauss-Bonnet formula is then
$$  f_G(t)-1 = \sum_{v \in V} F_{S(v)}(t)  \; . $$
The Euler characteristic is obtained by evaluating at $t=-1$.
This actually corresponds to the Gauss-Bonnet-Chern 
integrand in the continuum which contains Pfaffians of curvature tensor 
entries.

\begin{figure}[!htpb]
\scalebox{0.3}{\includegraphics{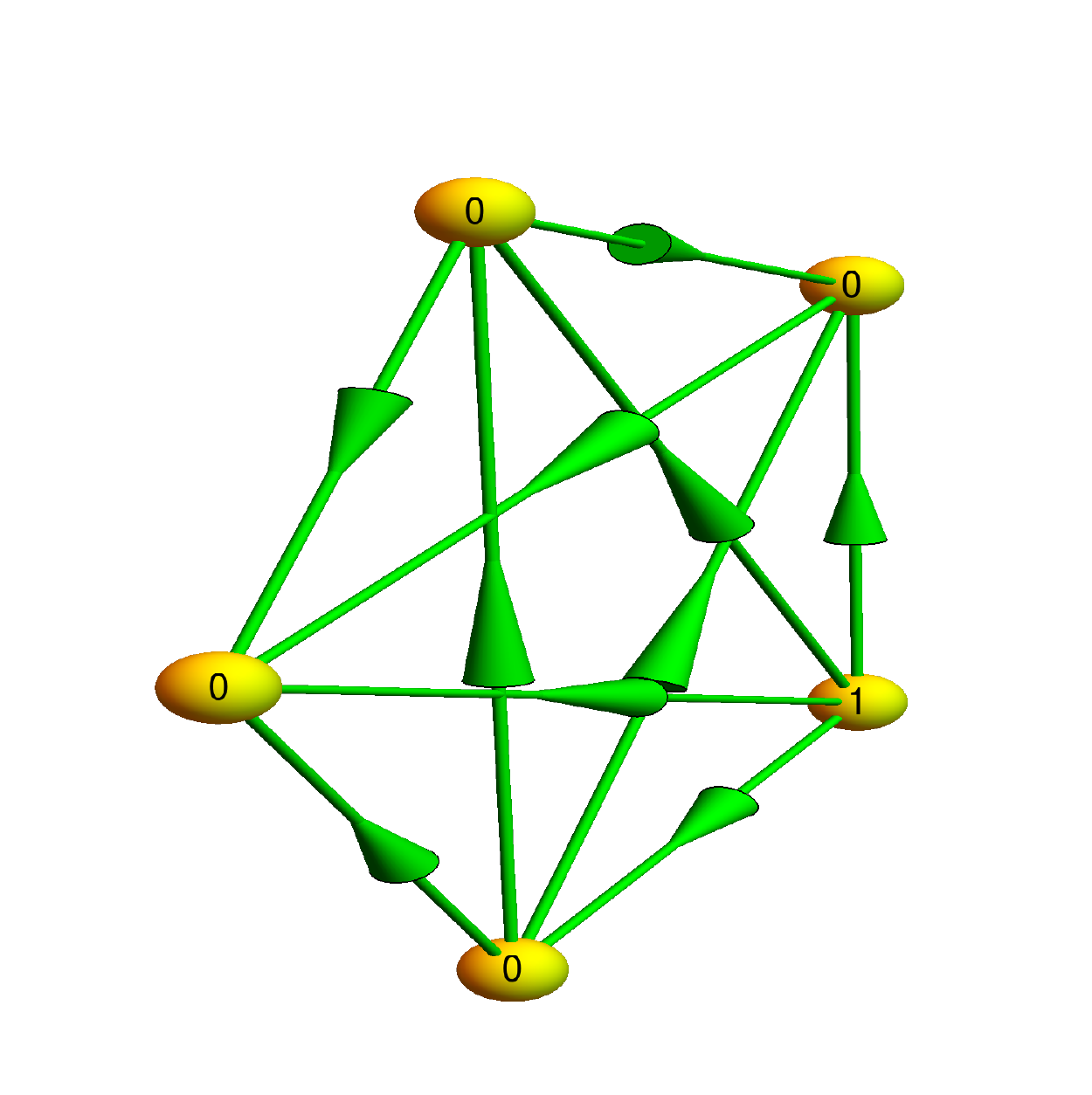}}
\scalebox{0.3}{\includegraphics{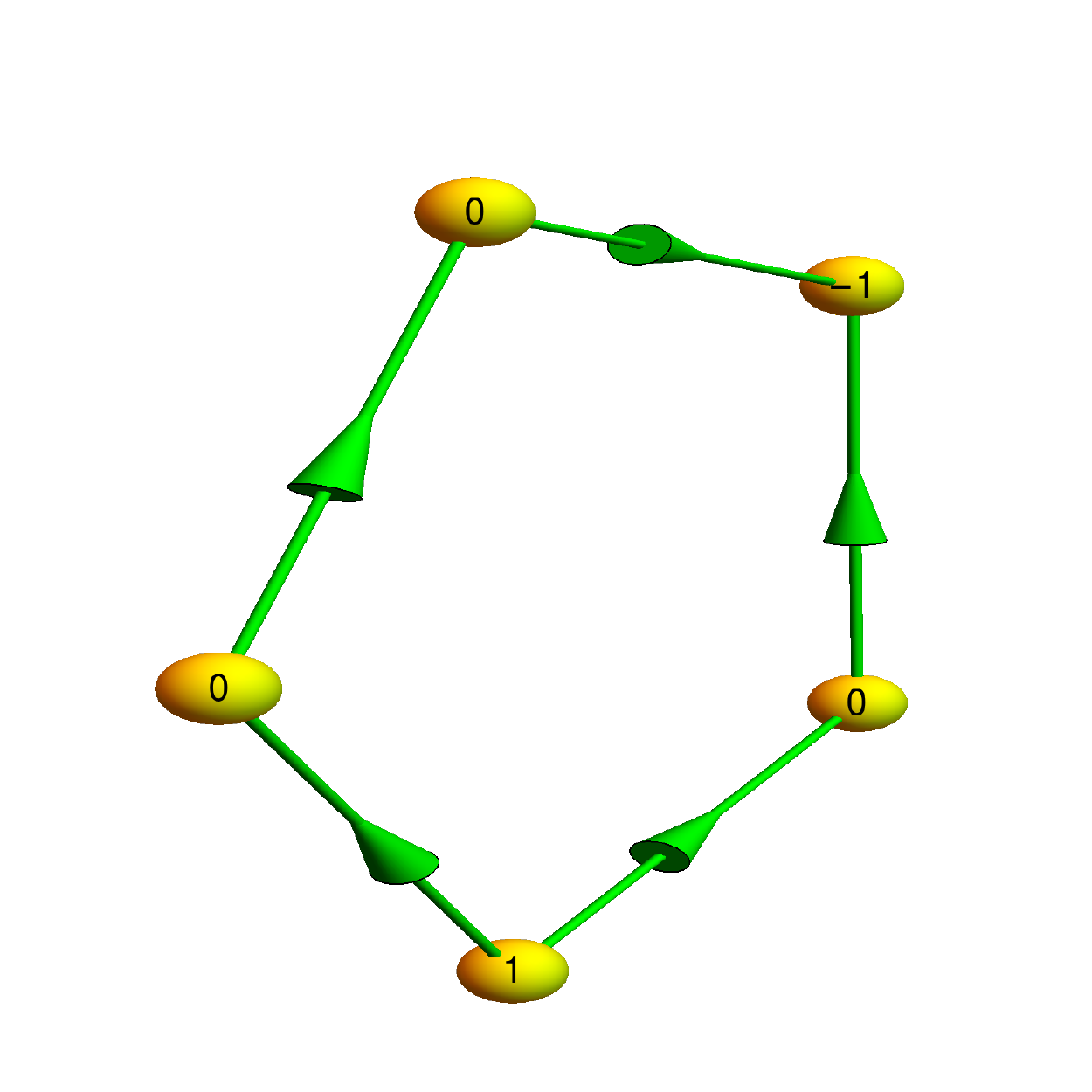}}
\scalebox{0.3}{\includegraphics{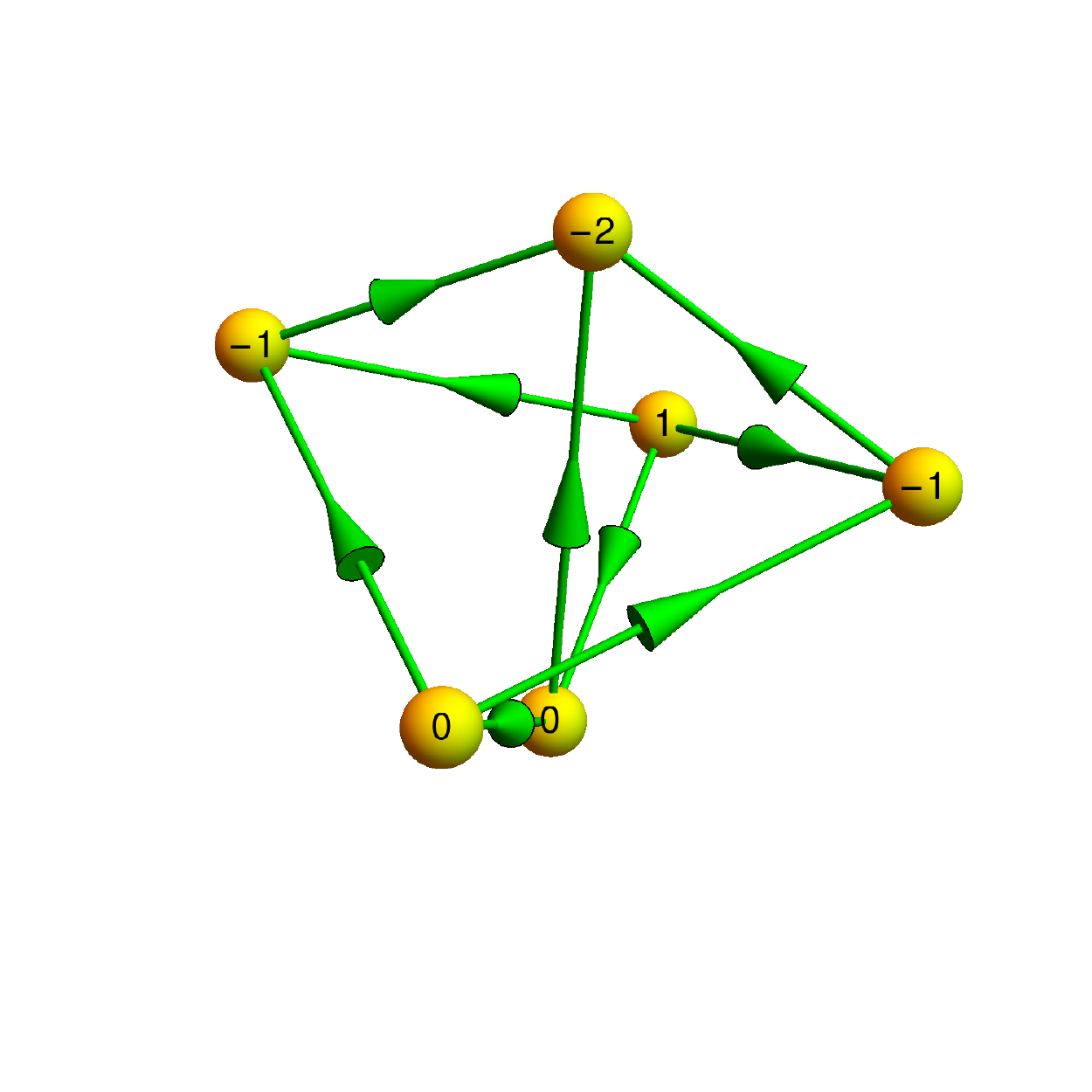}}
\label{triangle}
\caption{
Three more examples: for every complete graph, there is always just one point
with index $1$, the minimum. For cyclic graphs, the indices are either $-1$ or $1$
and there are the same number of each. For the utility graph with 
Euler characteristic $-3$, there are also examples for which all indices
are non-zero.
}
\end{figure}

\paragraph{}
The Poincar\'e-Hopf formula for the generating function $f_G(t)$ 
again relates to question how fast one can compute the $f$-vector of 
the graph. It is an NP-complete problem, as it solves the clique problem. Assuming the
clique problem is hard, we know that the following problem is hard: 
{\it how do we place an irrotational digraph structure on a graph such that
$S_-(x)$ contains about half of the vertices of the unit sphere $S(v)$ of 
every $v \in V$.}

\paragraph{}
Examples: \\
{\bf 1)} If the direction $F$ comes from a coloring $g:V \to \mathbb{R}$,
the direction is defined by $v \to w$ if $g(v)<g(w)$. The graph $S^-(v)$ is 
generated by $\{ w \in V \; | \  g(w)<g(v) \}$. See \cite{poincarehopf}. \\
{\bf 2)} For graphs without triangles or graphs
equipped with the $1$-dimensional skeleton simplicial complex, 
the index is $i(v) = 1-{\rm deg}_-(v)/2$, where ${\rm deg}_-(v)$ is the
number of incoming vertices.  \\
{\bf 3)} For $2$-graphs, graphs for which every unit sphere is a circular
graph $C_n$ with $n \geq 4$, the index $i_v(x)$ is one minus the number 
of connectivity components of $S(x)$ which come into $x$. 
If all point into $x$ (this is a sink) or all point away 
from $x$ (this is a source), then the index is $1$. With exactly two components
getting in and two parts getting out, we get a saddle. As in the continuum, the index
can not get larger than $1$. The index is $0$ if there is one incoming
direction and one outgoing direction. 

\begin{figure}[!htpb]
\scalebox{0.45}{\includegraphics{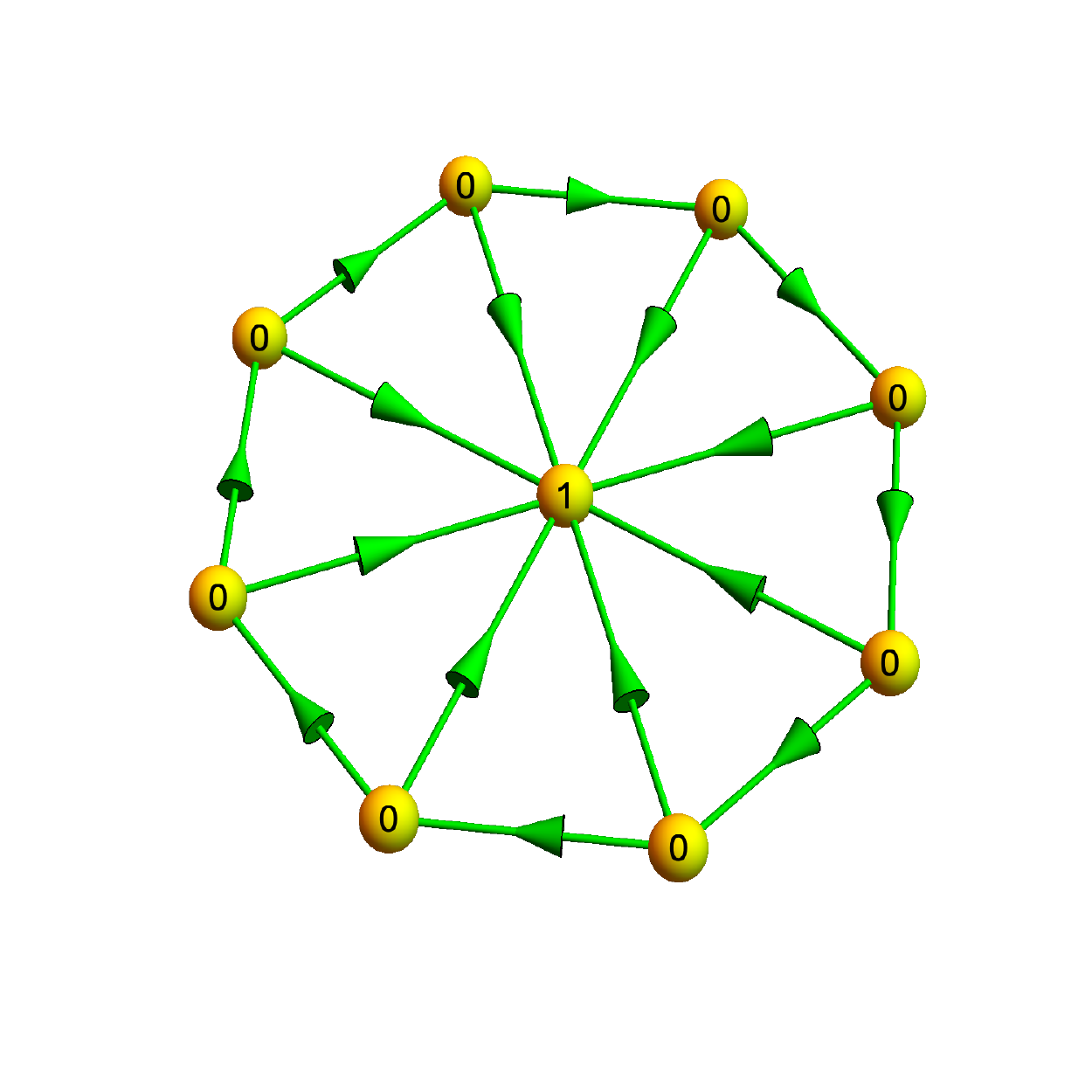}}
\scalebox{0.45}{\includegraphics{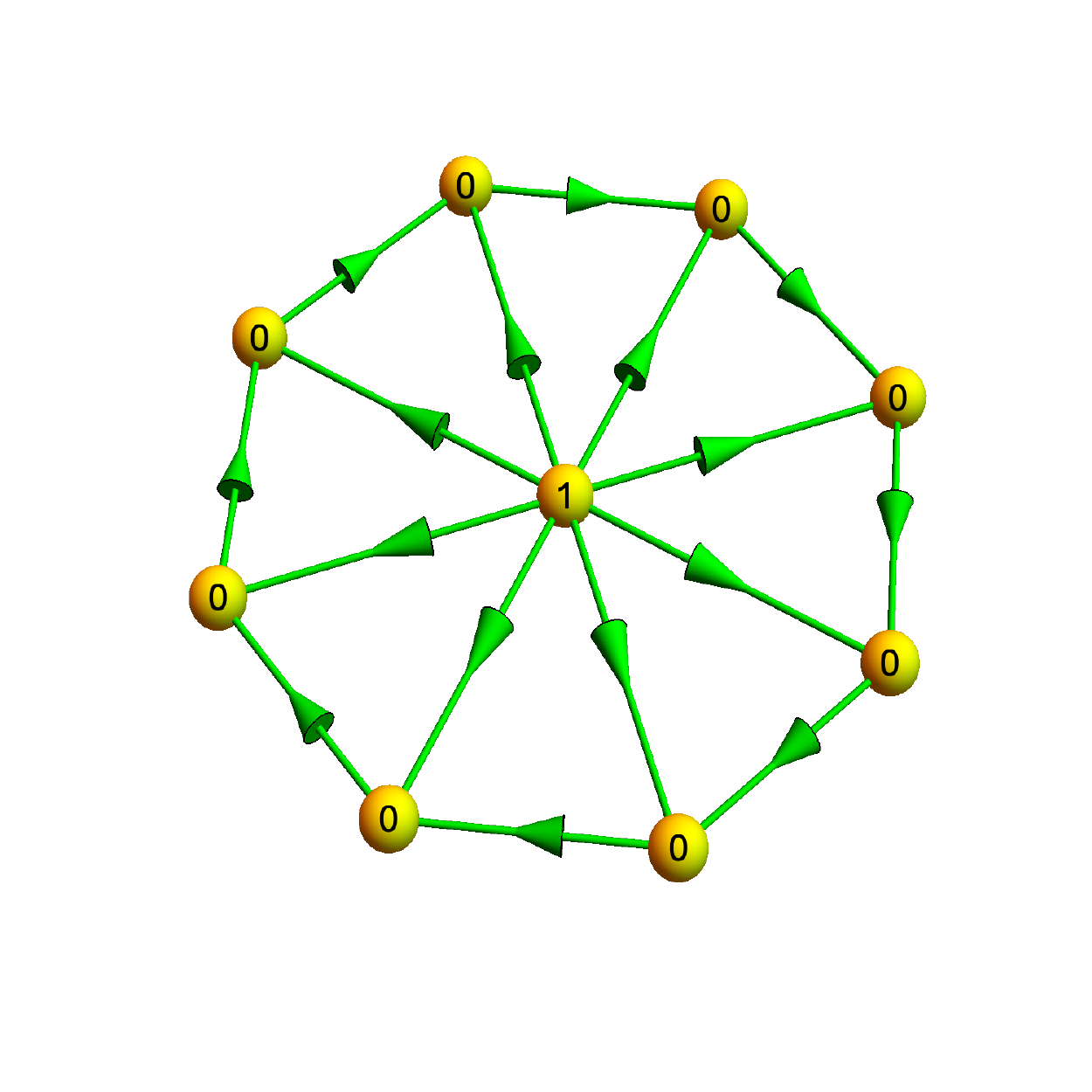}}
\label{triangle}
\caption{
For a sink and source in a 2-graph, the index is $1$. 
In the sink case, the incoming graph is the unit sphere
which has Euler characteristic $0$. In the source case, 
the incoming graph is the empty graph which has zero
Euler characteristic. 
}
\end{figure}

\begin{figure}[!htpb]
\scalebox{0.45}{\includegraphics{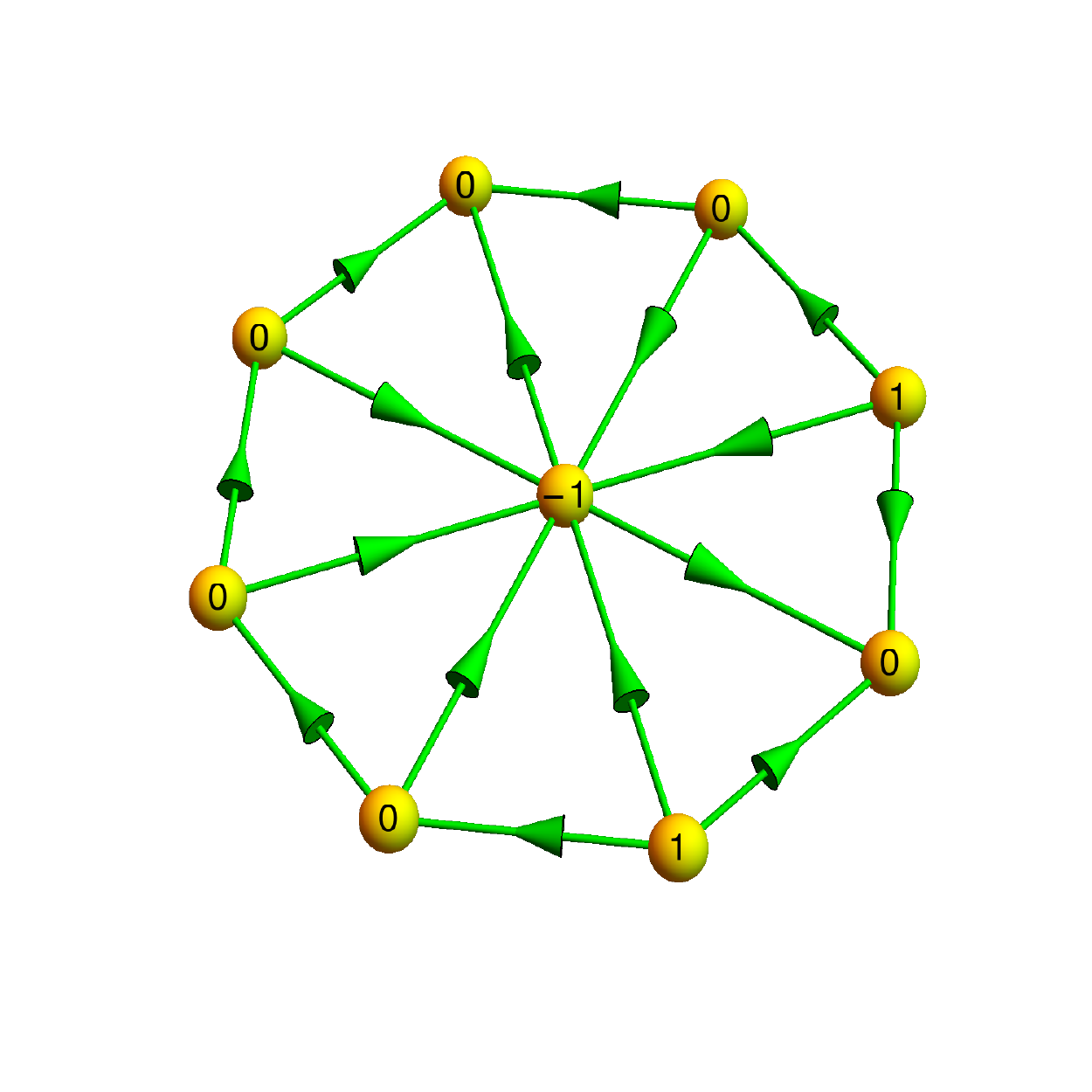}}
\scalebox{0.45}{\includegraphics{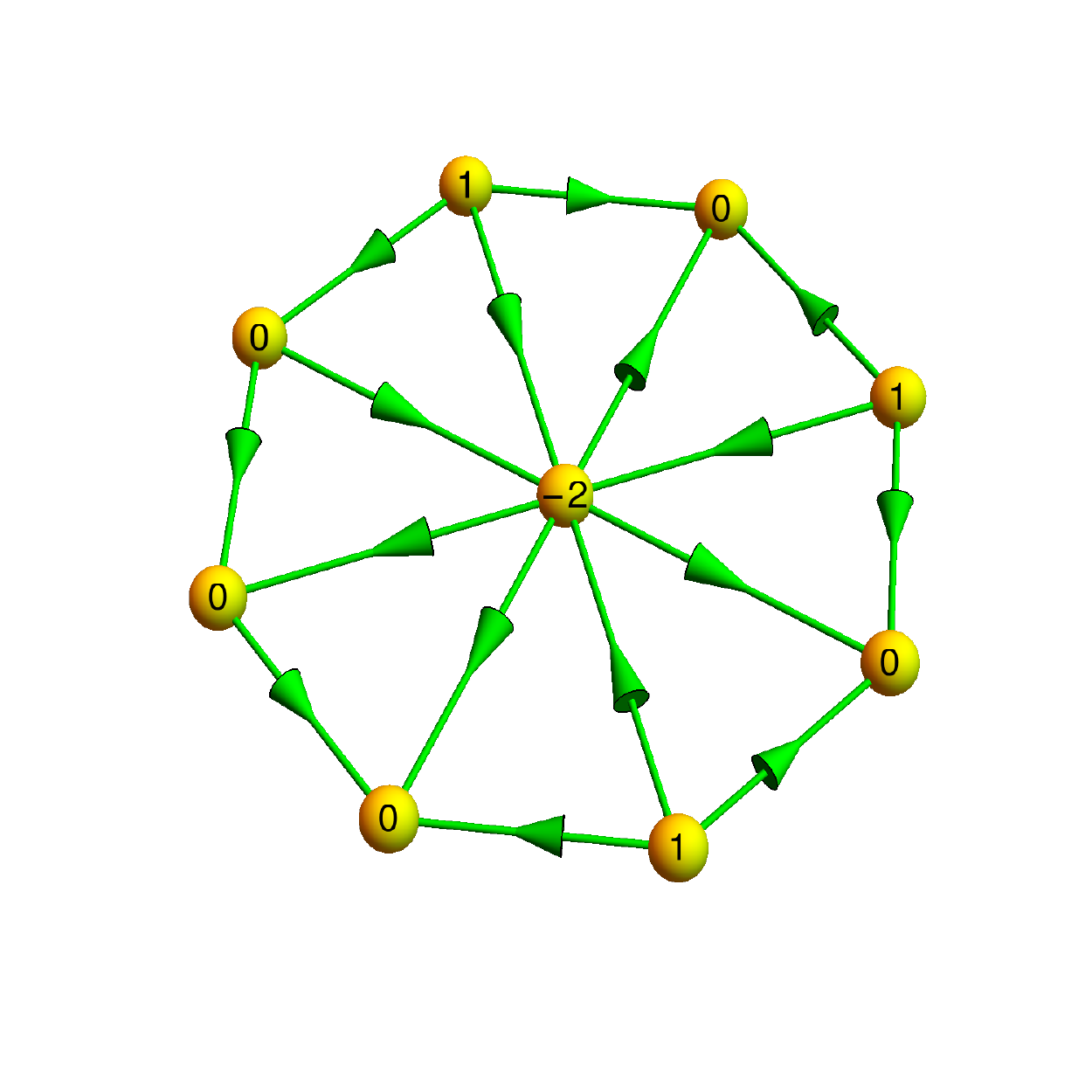}}
\label{triangle}
\caption{
For a saddle on a $2$-graph, the index is negative. 
We see a Morse saddle to the left and a Monkey saddle 
of index $-2$ to the right. There are then three incoming
and three outgoing connected components in $S(x)$.  
}
\end{figure}

\begin{figure}[!htpb]
\scalebox{1.2}{\includegraphics{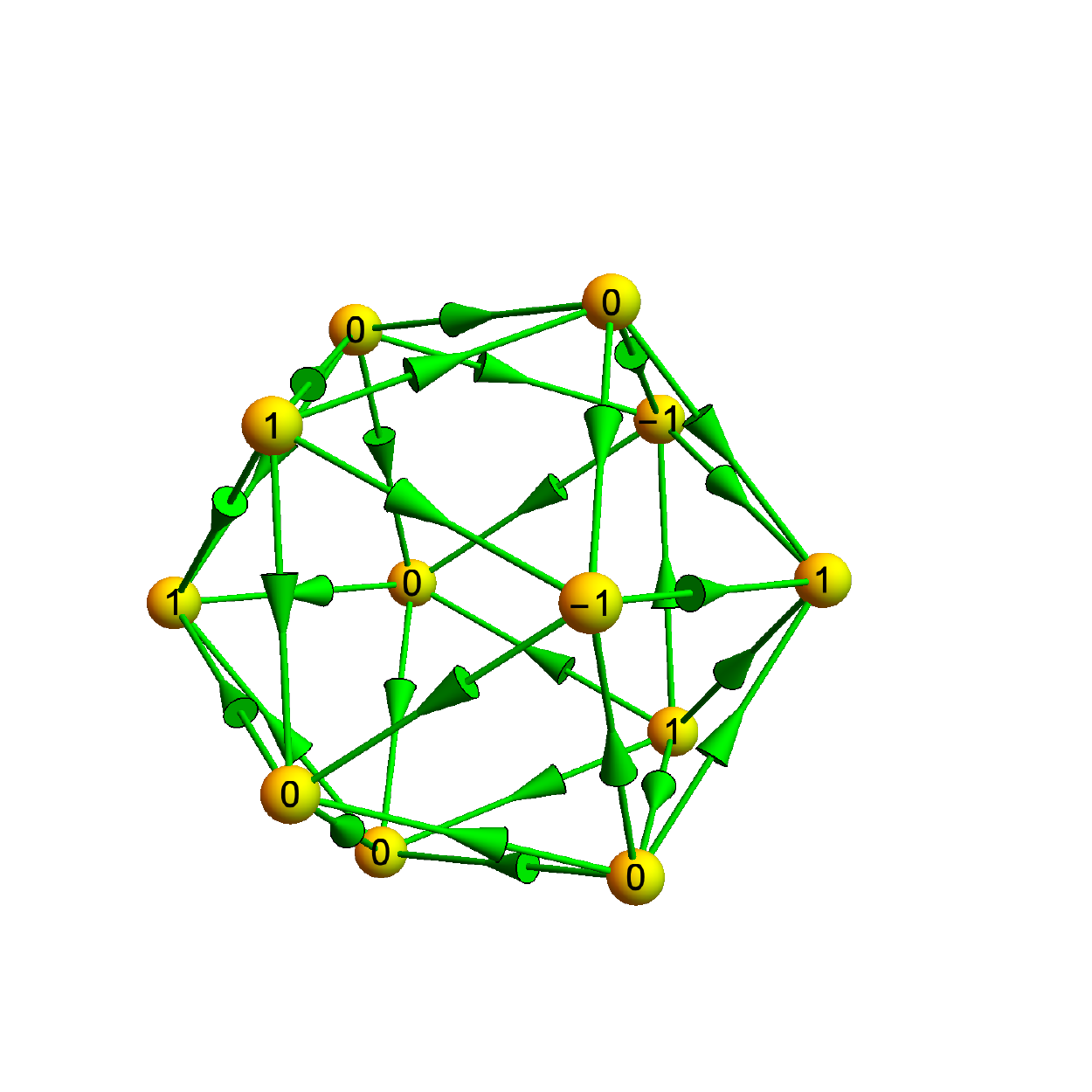}}
\label{triangle}
\caption{
A directed icosahedron graph with 4 equilibrium points of index 1
and two saddle equilibria of index $-1$. The Euler characteristic is
$2$, as it should be for 2-spheres.
}
\end{figure}

\section{Curvature}

\paragraph{}
In a broad way, curvature can be defined integral geometrically as index expectation. 
Gauss-Bonnet is then a direct consequence of Poincar\'e-Hopf, not requiring any proof. 
In the discrete, this has been explored in 
\cite{cherngaussbonnet, indexexpectation,indexformula,eveneuler}.
The definition has the advantage that can be used as a {\bf definition} both
in the discrete as well as in the continuum. 
While indices are integers and so divisors, curvature $K$ is in general 
real-valued. It still satisfies the Gauss-Bonnet constraint, assuring that the 
total curvature is Euler characteristic. \\

We can so study the question which spaces allow for constant curvature and explore
the first Hopf conjecture in the discrete: the later
is the question whether positive sectional curvature implies positive Euler characteristic
for even-dimensional discrete manifolds. Having the same question in the discrete allows a 
different avenue in exploring this notoriously difficult question for Riemannian manifolds. 
We will write about this oldest open problem in global differential geometry more elsewhere.
Of course, {\bf positive curvature manifolds} are then defined as a pair $(M,\mu)$ where
the probability  measure $\mu$ on Morse functions $\Omega$ has the property that
for any point $x \in M$ and any any two dimensional plane in $T_xM$, the manifold 
$N=\exp_x(D)$ has positive $\mu$-curvature at $x$. This makes sense as almost all 
Morse functions on $M$ induce Morse functions on $N$ near $x$. 

\paragraph{}
If $G=(V,E)$ is a finite simple graph. Assume that a probability measure $\mu$ on 
the space $\Omega$ of edge directions $F$ is given. This defines a probability measure 
$p_x$ on each simplex $x$. The probability $p_x(v)$ is the probability that $v$ is 
the largest element in $x$. The curvature $K(v) = E[i_g(v0]$ is the expectation 
of index functions $i_g(v)$. It satisfies 

\begin{thm}[Gauss-Bonnet]
$\sum_{ v \in V} K(v) = \chi(G)$
\end{thm}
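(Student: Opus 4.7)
The plan is to derive Gauss-Bonnet directly from the discrete Poincaré-Hopf theorem (Theorem 1) by a linearity-of-expectation argument. The only substantive observation needed is that $\chi(G)$ does not depend on the direction $F \in \Omega$, so it is constant as a random variable on the probability space $(\Omega, \mu)$.

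First, I would set up notation carefully: for each $F \in \Omega$, write $i_F(v) = 1 - \chi(S_F^-(v))$ for the Poincaré-Hopf index at $v$ with respect to the digraph orientation induced by $F$, and let $K(v) = E[i_F(v)] = \int_\Omega i_F(v)\, d\mu(F)$. I would tacitly assume (or explicitly impose as a hypothesis on $\mu$) that $\mu$-almost every $F$ has no cyclic triangles, so that Theorem 1 applies pointwise: for $\mu$-a.e.\ $F$,
\[
\sum_{v \in V} i_F(v) = \chi(G).
\]

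Next I would simply integrate this identity over $\Omega$. Since $V$ is finite, the sum and the integral commute trivially, so
\[
\sum_{v \in V} K(v) = \sum_{v \in V} \int_\Omega i_F(v)\, d\mu(F) = \int_\Omega \sum_{v \in V} i_F(v)\, d\mu(F) = \int_\Omega \chi(G)\, d\mu(F) = \chi(G),
\]
using that $\mu$ is a probability measure in the last step.

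There is really no obstacle here beyond bookkeeping: the content of Gauss-Bonnet in this framework is entirely carried by Theorem 1, and curvature is just a weighted average of indices. The only delicate point worth flagging in the write-up is the standing assumption that $\mu$ is supported on the set of irrotational (cycle-triangle-free) orientations, because outside that set the pointwise Poincaré-Hopf identity has not been established; this is in line with the paper's earlier convention that the index $i(v) = 1 - \chi(S^-(v))$ is defined in the digraph setting without triangular cycles.
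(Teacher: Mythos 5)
Your proof is correct and is essentially identical to the paper's own argument: take expectations over $\Omega$ on both sides of the Poincar\'e-Hopf identity and use linearity together with the fact that $\chi(G)$ is constant. Your added remark that $\mu$ should be supported on triangle-cycle-free orientations is a reasonable clarification of a hypothesis the paper leaves implicit.
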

\begin{proof}
Just take the expectation on both sides of the Poincar\'e-Hopf
identity $\chi(G) = \sum_{v \in V} i_g(v)$. 
\end{proof}

\paragraph{}
One interesting question we did not explore yet is how how big the probability of 
irrotational directed graphs are in the space of all directed Erd\"os-R\'enyi graphs 
with $n$ vertices and edge probability $p$ and how to construct irrotational directions
more generally than using potentials $g:V \to \mathbb{R}$ and defining $v \to w$
if $g(v) < g(w)$. For $p=1$, we look at complete graphs
of $n$ elements which has $m=n(n-1)/2$ edges and $2^m$ directions. We expect for
each triangle to have a probability $6/2^3=3/4$ to produce no cycle of length $3$. As
triangles appear with probability scaling $p^3$ and each triangle is with probability 
$1/4$ cyclic, we expect cyclic triangles to appear with a frequency with which 
triangles appear when the probability is $p \cdot (3/4)^{1/3} \sim 0.908 p$. 

\section{The Continuum}

\paragraph{}
Let $M$ be a smooth compact Riemannian manifold and let $\mu$ be 
a probability measure on smooth vector fields $F$ such that for $\mu$-almost all 
fields $F$, there are only finitely many hyperbolic equilibrium points. The later 
means that the Jacobean $dF(x)$ is invertible at every of the finitely many 
equilibrium point. We can then get a curvature on $M$ by taking the index 
expectation of these vector fields. 

\paragraph{}
For example, if $\mu$ is a measure on the set of Morse functions $g$ of a Riemannian 
manifold $M$, then the Hessian at a critical point of a function $g$ is the Jacobean of 
$F={\rm grad}(g)$. 
The Morse condition assures that the Hessians are invertible. The index expectation $K$
satisfies automatically Gauss-Bonnet. Such measures always exist.
Can every smooth function $K$ be realized as index expectation
if the Euler characteristic constraint is satisfied? 
More precisely, given a smooth function $K$ on $M$ 
satisfying $\int_M K \; dV = \chi(G)$, is there a measure on Morse functions such that $K$ 
is the index expectation? What we can show so far is that on any Riemannian manifold 
$M$ there is a measure $\mu$ on Morse functions which produces a constant curvature $K$. 
This will be explored a bit elsewhere. 

\paragraph{}
How do we get the Gauss-Bonnet-Chern integrand, the curvature $K$ which produces the 
Euler characteristic $\int_M K(x) \; dV(x) = \chi(M)$ as a Pfaffian of a Riemannian curvature
tensor expression? One possibility is to 
Nash \cite{EssentialNash} embed the Riemannian manifold into a finite dimensional Euclidean space $E$ and to 
take the probability space $\Omega$ of linear functions on $E$ which is a finite dimensional 
manifold and carries a natural unique rotational invariant measure $\mu$. 
Almost every function $g$ is known to be Morse with respect to this measure. It defines 
so Poincar\'e-Hopf indices $i_g(x)$ for almost every $g$. 
This leads to a curvature $K(x) = E[i_g(x)]$ which satisfies the 
{\bf theoremum egregium} (meaning that it is independent of the embedding). It also agrees with
 the Euler curvature: 

\begin{thm}
The expectation $K(x) = E_{\mu}[i(x)]$ is the Gauss-Bonnet-Chern 
integrand. It is by construction independent of the embedding. 
\end{thm}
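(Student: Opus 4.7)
The plan is to reduce the index expectation $K(x)$ to a local integral over the unit normal sphere of $M \subset E$, and then to invoke a classical Chern-type identity that identifies that integral with the Pfaffian of the Riemannian curvature.

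\textbf{Step 1 (reduction to a sphere integral).} By rotation invariance, $\mu$ can be realized as the uniform probability measure on the unit sphere $S^{N-1} \subset E$, viewed as the space of linear functions $g_v(y)=\langle v,y\rangle$. A point $x \in M$ is a critical point of $g_v|_M$ precisely when $v$ lies in the normal space $N_xM$, and the Hessian of $g_v|_M$ at $x$ equals the shape operator $A_v = \sum_\alpha v^\alpha h^\alpha$, where $h^\alpha$ are the second fundamental forms along an orthonormal normal frame $\{e_\alpha\}$ and $v=\sum v^\alpha e_\alpha$. Hence, whenever $g_v$ is Morse along $M$, $i_{g_v}(x) = \mathrm{sgn}\det A_v$.

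\textbf{Step 2 (coarea on the unit normal bundle).} The normal Gauss map $\phi\colon SN \to S^{N-1}$, $(x,v)\mapsto v$, on the unit normal bundle $SN$ of $M$ has Jacobian $|\det A_v|$ at $(x,v)$. Combining the coarea formula with Step 1 and the identity $\mathrm{sgn}(\det A_v)\cdot|\det A_v|=\det A_v$, the density of the index-expectation measure at $x$ is
$$K(x) = \frac{1}{|S^{N-1}|} \int_{S^{N-n-1}_x} \det A_v \; d\sigma(v),$$
where $n=\dim M$ and $S^{N-n-1}_x$ is the unit sphere in $N_xM$.

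\textbf{Step 3 (identification with the Pfaffian).} Expanding $\det A_v$ via the Leibniz rule and integrating over the unit sphere in $N_xM$, all odd-degree monomials in the $v^\alpha$ vanish by symmetry, and the even-degree spherical moments are computed by the Wick formula, pairing the normal indices via products of Kronecker deltas. The resulting polynomial in the $h^\alpha_{ij}$ coincides, after applying the Gauss equation $R_{ijkl} = \sum_\alpha (h^\alpha_{ik} h^\alpha_{jl} - h^\alpha_{il} h^\alpha_{jk})$, with the Pfaffian $\mathrm{Pf}(\Omega)$ of the curvature $2$-form up to a universal constant depending only on $(N,n)$. This is the content of Chern's original computation in his intrinsic proof of Gauss-Bonnet-Chern; in particular when $n$ is odd the integral vanishes, consistent with $\chi(M)=0$.

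Since $\mathrm{Pf}(\Omega)$ is an intrinsic invariant of the metric, independence of the Nash embedding is automatic, which is the theorema egregium content of the statement. The main obstacle is Step 3: explicitly matching signs and combinatorial coefficients between the spherical average of $\det A_v$ and the Pfaffian requires careful bookkeeping with the Wick pairings, though a useful sanity check is that by Gauss-Bonnet (Theorem 3 above) the overall normalization is already forced to produce $\int_M K\, dV = \chi(M)$.
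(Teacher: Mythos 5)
Your argument is essentially correct, but it is a genuinely different route from the one the paper takes. The paper's proof is a deferral: it invokes Regge calculus and the Cheeger--M\"uller--Schrader theorem, i.e.\ one approximates $M$ by piecewise flat spaces, defines the curvatures there integral-geometrically, and shows that these converge as measures to the Lipschitz--Killing curvatures of the smooth manifold; the identification with the Gauss--Bonnet--Chern integrand is then inherited from that limit theorem. You instead work directly on the smooth embedded manifold: identifying $i_{g_v}(x)$ with $\mathrm{sgn}\det A_v$, pushing the sphere average through the coarea formula for the normal Gauss map so that $\mathrm{sgn}(\det A_v)\,|\det A_v| = \det A_v$, and then converting the spherical moments of $\det A_v$ into curvature terms via the Gauss equation. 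This is the classical Allendoerfer--Weil / Chern--Lashof computation (your attribution to Chern's \emph{intrinsic} proof is slightly off --- the extrinsic identification of the normal-sphere average of $\det A_v$ with the Pfaffian is due to Allendoerfer and Weil, and in the PL setting to Banchoff, whom the paper cites elsewhere). Your route buys a self-contained, purely smooth proof in which the theorema egregium conclusion falls out for free, since the answer is manifestly intrinsic; the paper's route buys a single framework that treats graphs, polytopes and manifolds uniformly, which is the point of the article. One caveat on your Step 3: the remark that the normalization is ``forced'' by Gauss--Bonnet only pins down the constant \emph{after} you know the spherical average is a universal multiple of $\mathrm{Pf}(\Omega)$; that proportionality itself is exactly the Wick-pairing computation you defer, so it cannot be bypassed by the normalization argument. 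As a complete proof you would still need to carry out (or properly cite) that computation, but as a strategy it is sound and arguably more informative than the citation given in the paper.
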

\begin{proof}
This has been proven using Regge calculus \cite{Regge,Misner,CheegerMuellerSchrader}.
The Cheeger-Mueller-Schrader paper which also gives a new proof of Gauss-Bonnet-Chern.
For a proof of Patodi, see \cite{Cycon}.
The Regge approach means making a piecewise linear 
approximations of the manifold, defining curvature integral geometrically and 
then show that the limit converges as measures. \cite{CheegerMuellerSchrader}.
\end{proof}

\paragraph{}
The integral geometric point of view has been put forward earlier
in \cite{Banchoff1967}. The Regge approach allows to fit the 
discrete and continuum using integral geometry. The frame works of
Poincar\'e-Hopf curvature works equally well for polytopes and manifolds
and graphs. 

\paragraph{}
An other approach is to define {\bf axiomatically} what a ``good curvature"
on a Riemannian manifold should be. Let us assume that $M$ is a smooth,
compact Riemannian manifold and say that
$K$ is a ``good curvature" if the following properties hold:

\begin{itemize}
\item $K$ is a smooth function on $M$. 
\item Generalizes Gauss: In the two dimensional case, $K$ is the Gauss curvature.
\item In odd dimensions, $K$ is identically zero. 
\item Gauss-Bonnet: $\int_M K(x) \; dV(x) = \chi(M)$ is the Euler characteristic.
\item Theorema Egregium: $K$ does not depend on any embedding in an ambient space.
\item $K$  is local in the sense that $K(x)$ for $M$ is the same than the curvature
      of $K(x)$ when restricted to a small neighborhood of $x$. 
\end{itemize}

Added December 21: One could add $K_{M \times N}(x,y) = K_M(x) K_N(x)$, if $M,N$
are two even dimensional manifolds. 

\paragraph{}
We believe that in even dimensions, the Gauss-Bonnet-Chern integrand is the only choice for
these postulates. Since also the index expectation of a probability space of linear functions
in an ambient Euclidean space satisfies the postulates, this would establish the relation 
proven so far only via Regge calculus. Not everything with name curvature qualifies. Ricci
curvature, sectional curvature, mean curvature or scalar curvatures are of different type. 
Already curvature $|T'|/|r'|$ with $T=|r'|$ of a parametrization of a curve is an example
of a curvature which does not qualify, even without the odd dimension assumption. It does
not satisfy the Gauss-Bonnet formula for example.

\paragraph{}
A direct link between the continuum and discrete is given by internal set theory \cite{Nelson77}.
In that theory, a new attribute ``standard" is added to the standard axioms ZFC of set theory using 
three axioms. The advantage of internal set theory to other non-standard approaches is that
usual mathematics is untouched and that it is known to be a consistent extension of ZFC. 
A language extension allows for powerful shortcuts in real
analysis and or probability theory. For the later see the astounding monograph \cite{Nelson}. 
Borrowing terminology of that book, one could name discrete 
graph theoretical approaches to Riemannian geometry a ``radically elementary
differential geometry". It is extremely simple and works as follows:

\paragraph{}
In general for any mathematical object, there exists a {\bf finite set} which contains all the 
standard elements in that object. 
(See Theorem 1.2 in \cite{Nelson77}). In particular, 
given a compact manifold $M$, there exists a finite set $V$ which 
contains all standard points of $M$. 
This set is the {\bf vertex set} of a graph. 
Let $\epsilon>0$ be any positive number we can say that two points 
in $M$ are called ``connected" if their distance is smaller than $\epsilon$.
Now define the edge set $E$ as the set of pairs $(x,y)$ such that 
$x$ and $y$ are connected. This defines a finite simple graph. 
It depends on $\epsilon$. 
(One can not define it without specifying some $\epsilon$ by using
pairs which are infinitesimally close. The axiom (S) in the IST 
requires the relation $\phi$ to be internal. 
This is a classical mistake done when using non-standard
analysis and it was done in the first version of this paper. 
Thanks to Michael Katz to point this error out to me). 
Here are the three axioms IST of Nelson which extend ZFC:

\begin{eqnarray*}
(I)&:& (\forall^{st, fin} z \exists x \forall y \in z \ \phi(x,y)) \Leftrightarrow (\exists x \forall^{st} y \ \phi(x,y)), \; \phi \ internal \\
(S)&:& \forall^{st} x \exists^{st}b \forall^{st}x \ (x \in b \Leftrightarrow x \in a \ and \ \phi(x)), \; \phi \ arbitrary \\
(T)&:& (\forall^{st}x \  \phi(x,u)) \Leftrightarrow (\forall x \ \phi(x,u)), \; \phi \ internal, \; u \ standard \\
\end{eqnarray*}

\paragraph{}
There are now Poincar\'e-Hopf or Gauss-Bonnet formulas
available and they lead to the same results if $\epsilon$ is infinitesimal. 
The graph $(V,E)$ is naturally homotopic to any good triangulation of the manifold. It is of 
course of much larger dimension but we do not care. 
But the finite simple graph $(V,E)$ gives  more, it contains all information about the 
original manifold $M$. 

\paragraph{}
Can one recover the Riemannian metric? Not from the graph itself as homeomorphic 
manifolds can be modeled by graph isomorphic graphs. We need more structure.
There are two ways to compute distances in the graph: 
use the Connes formula \cite{Connes} from the Dirac operator $D=d+d^*$ defined by 
the exterior derivative $d$ defined on the graph has an advantage that it can be deformed \cite{DiracKnill}. 
An other is to define the distance between two points $x,y$ as the geodesic distance between two points in the graph and 
scale this so that the diameter of $G$ is the same than the diameter of the manifold. 
The measure $\mu$ on Morse functions which produces the Euler curvature however 
should allow to recover the distance as Crofton formulas allow to define a length
of curves integral geometrically and so define a notion of geodesic. 

\section{Illustration} 

\paragraph{}
The simplest curvature is the {\bf signed curvature} $\frac{d}{dt} {\rm arg}(r'(t))$ of 
a planar curve $r(t)$. Gauss-Bonnet in that 
case goes under the name {\bf Hopf Umlaufsatz} which tells that for a simple closed 
$C^2$ curve, the total signed curvature is $2\pi$. We mention the Hopf proof \cite{hopf35}
because the index expectation definition of curvature allows deformations. Since Hopf 
could handle a subtle case with elegance using deformation, there is an obvious question
whether one can deform the measure $\mu$ defining curvature to make it positive if 
all sectional curvatures are positive. The Euler curvature has long been known not to 
be positive if curvature is positive. 

\paragraph{}
Hopf proved this through a homotopy argument. 
Assume $r(t)$ is parametrized as $r:[0,1] \to \mathbb{R}^2$. Given a path
from $(0,0)$ to $(1,1)$ he looked at the total angle change which must be a
multiple of $2\pi$. As the total change depends continuously on the curve, homotopic curves
from $(0,0)$ to $(1,1)$ have the same total change. Going from $(0,0)$ to $(1,0)$ gives $\pi$.
Going from $(1,0)$ to $(1,1)$ again gives $\pi$. So that the piecewise linear path from $(0,0)$ 
to $(1,1)$ gives a change of $2\pi$.

\begin{figure}[!htpb]
\scalebox{0.6}{\includegraphics{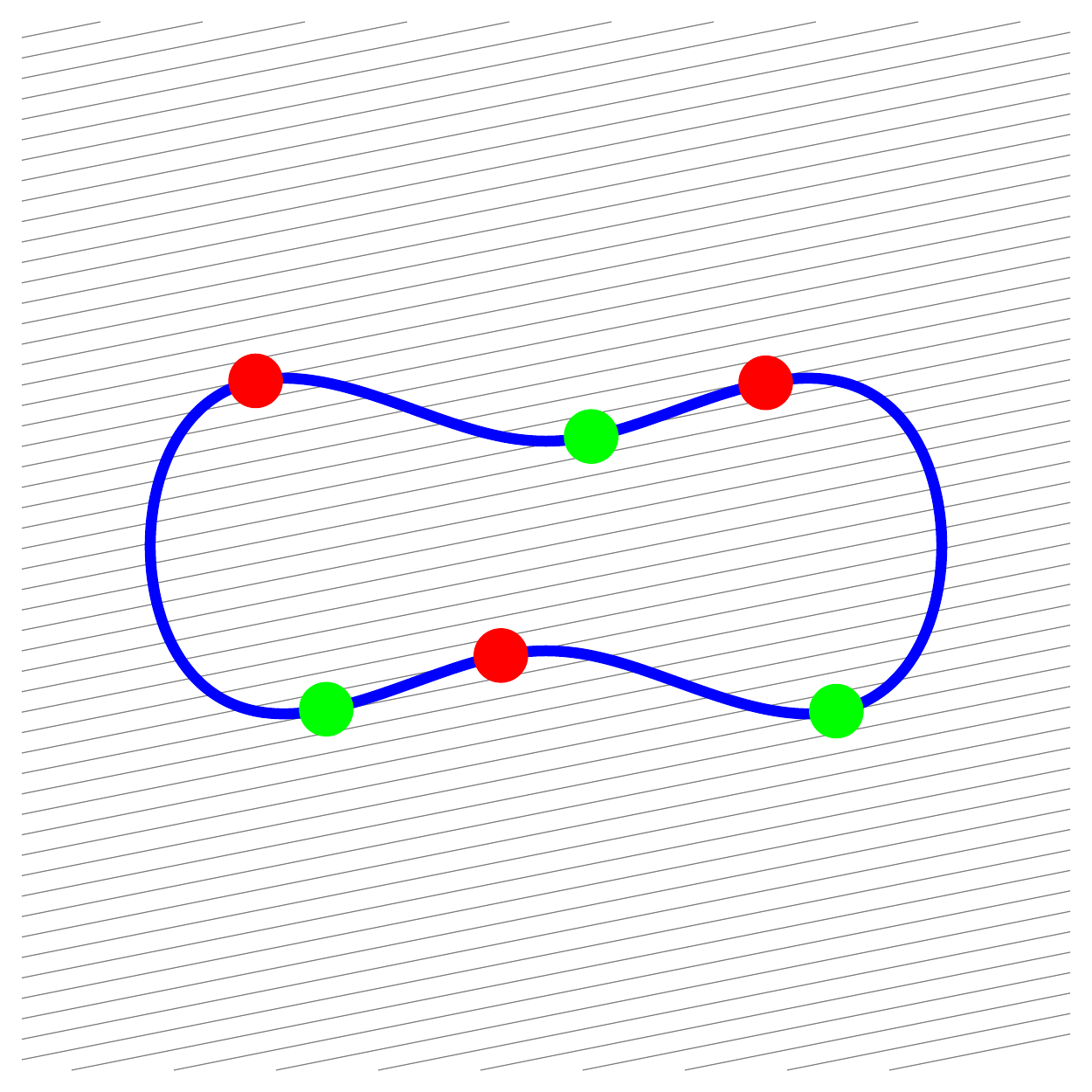}}
\label{hopf}
\caption{
Embedding a manifold $M$ into an ambient space and
computing the Morse indices of Morse functions obtained by 
a linear function in the ambient space produces indices.
Averaging over the projective space gives signed curvature.
Integrating over the entire circle gives 0 as every maximum
matches a minimum. 
}
\end{figure}

\paragraph{}
In the case of a one dimensional manifold, maxima have index $-1$
and minima have index $1$. The Puiseux formula shows that 
the expectation value of the index is the signed curvature 
$K(t)=r'(t) \times r''(t)/|r'(t)|^3$. Gauss-Bonnet is then the 
Hopf Umlaufsatz. 
The Gauss-Bonnet-Chern integrand is a priori not defined in the 
odd-dimensional case but there is an integral geometric invariant.

\begin{figure}[!htpb]
\scalebox{0.5}{\includegraphics{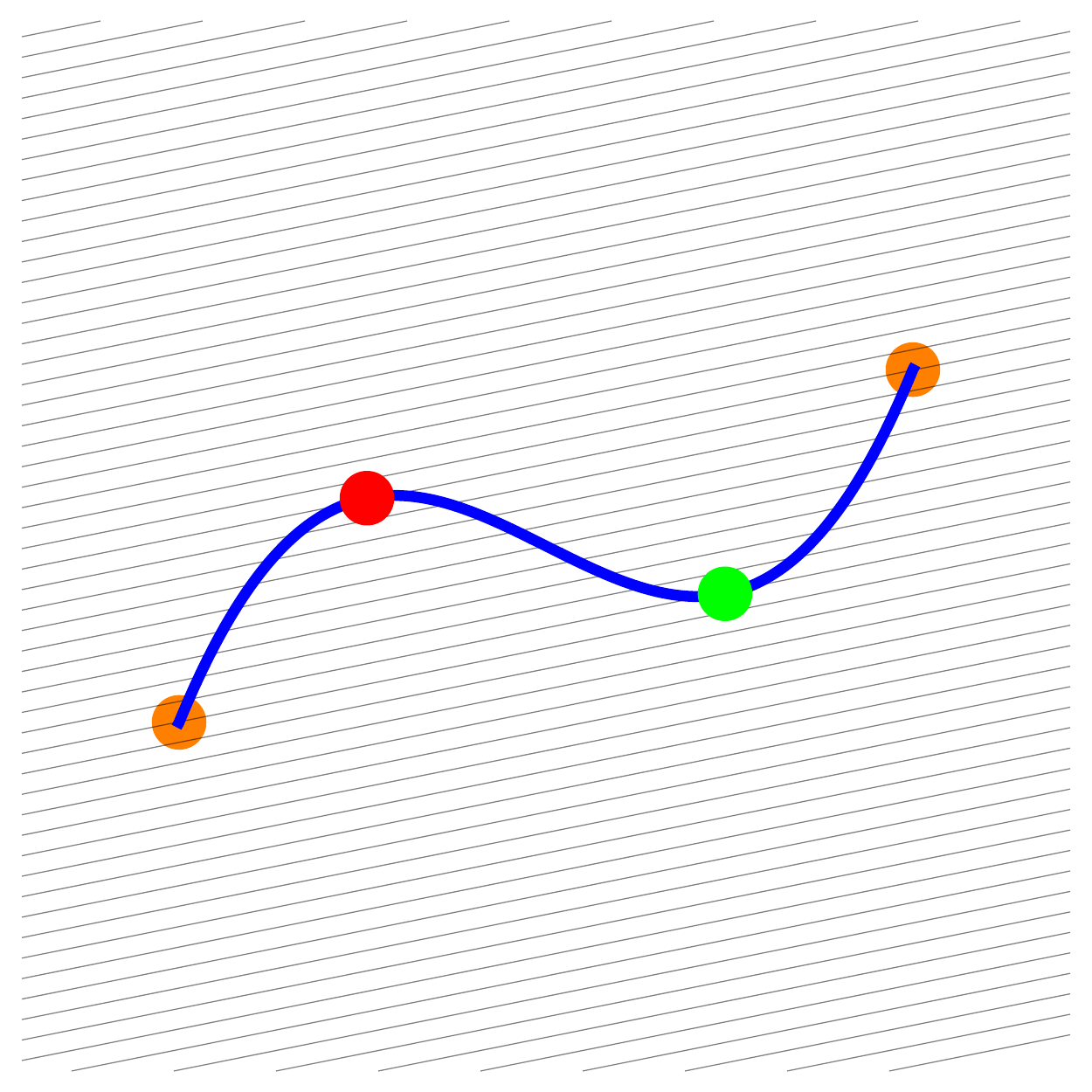}}
\label{hopf}
\caption{
For a manifold with boundary, the Poincar\'e-Hopf curvature gives
a boundary curvature. In the case of a one-dimensional 
manifold with boundary part of the curvature is on the boundary. 
The total curvature of a curve is $1$. If the measure is taking the
full $2\pi$ turn, then the curvature $1/2$ is supported on both ends
and zero inside. 
}
\end{figure}

\begin{figure}[!htpb]
\scalebox{0.5}{\includegraphics{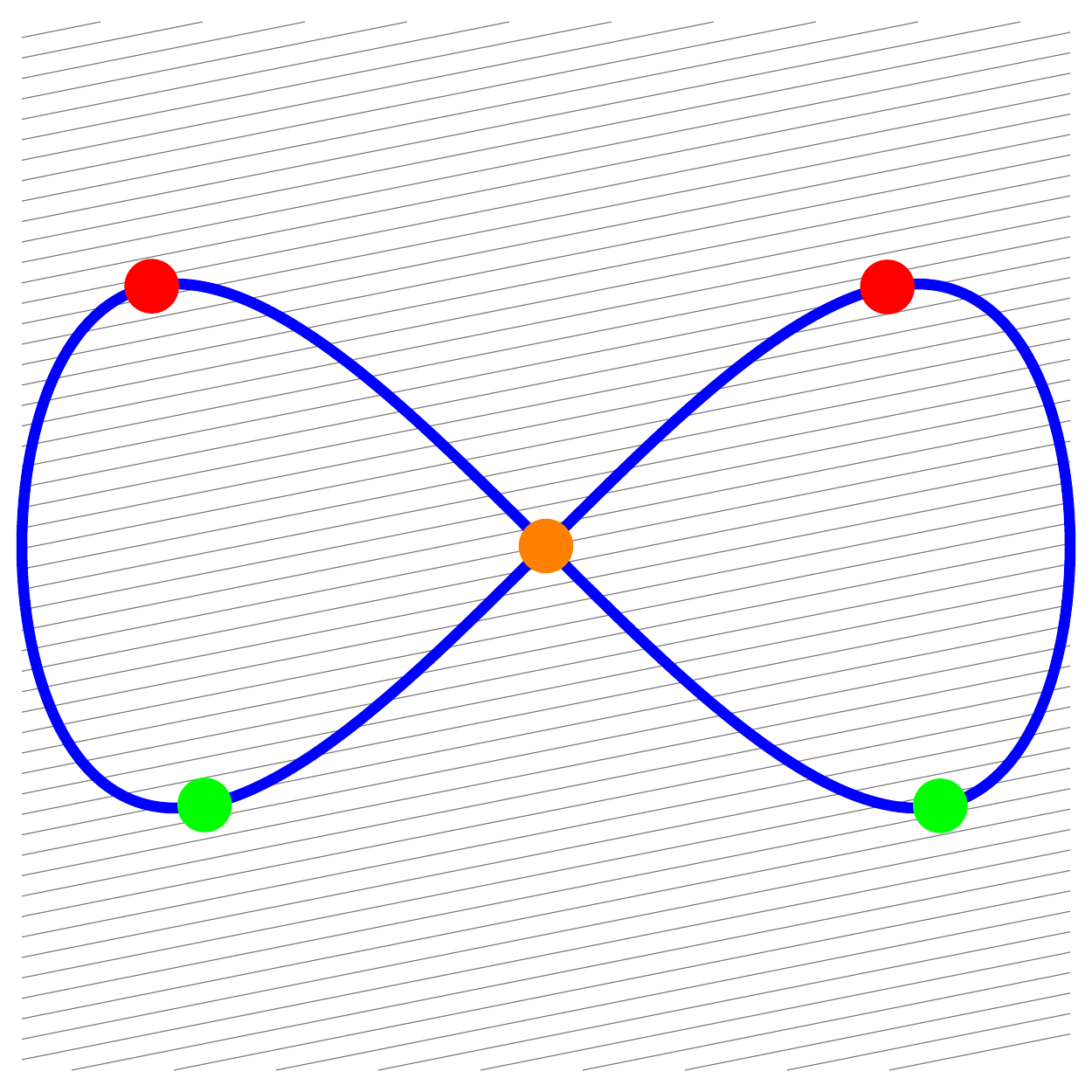}}
\label{hopf}
\caption{
For an odd dimensional variety $M$, the Poincar\'e-Hopf curvature 
is supported on the singularities. In this case of a figure 8 curve
in the form of the lemniscate $y^2-x^2+x^4=0$, the index expectation
is the Dirac point measure with weight $-1$ at $(0,0)$ (the indices at
other points cancel out like for a closed curve). The Euler
characteristic of $M$ is $-1$. In order that the Poincar\'e-Hopf index
to exist, we only need that for small enough $r$, the spheres $S_r(x)$
of a singular point are of the same class but smaller dimensional. It 
is enough for example to assume that the variety has the property that
$S_r(x)$ is a manifold for small enough $r$.
}
\end{figure}

\begin{figure}[!htpb]
\scalebox{0.6}{\includegraphics{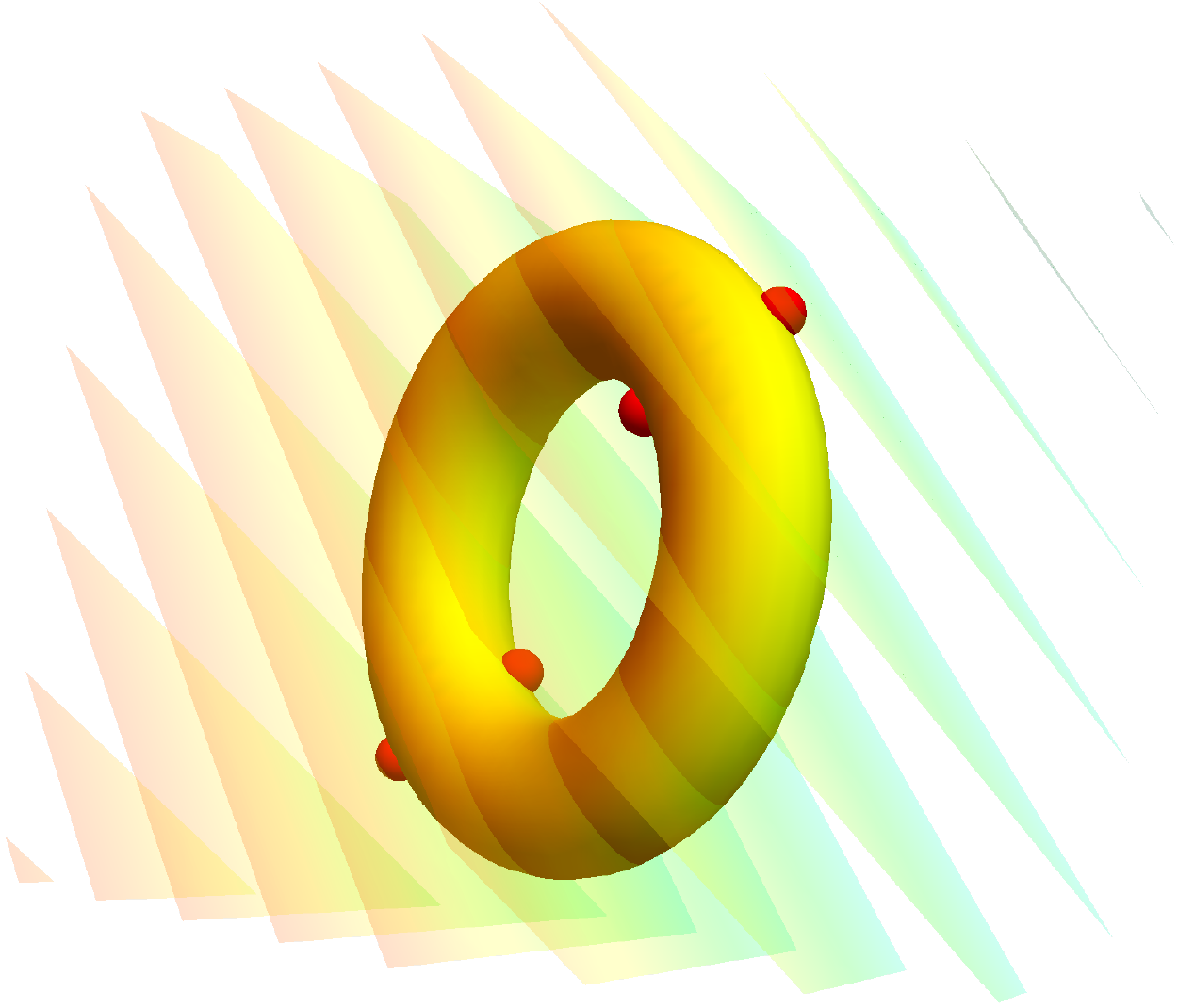}}
\label{hopf}
\caption{
Unlike for curves, in higher dimensions, one in general 
needs larger dimensional spaces for embedding. Here is an embedding
of a torus in $\mathbb{R}^3$ for which the index expectation gives the 
standard Gauss curvature. 
}
\end{figure}

\begin{figure}[!htpb]
\scalebox{0.6}{\includegraphics{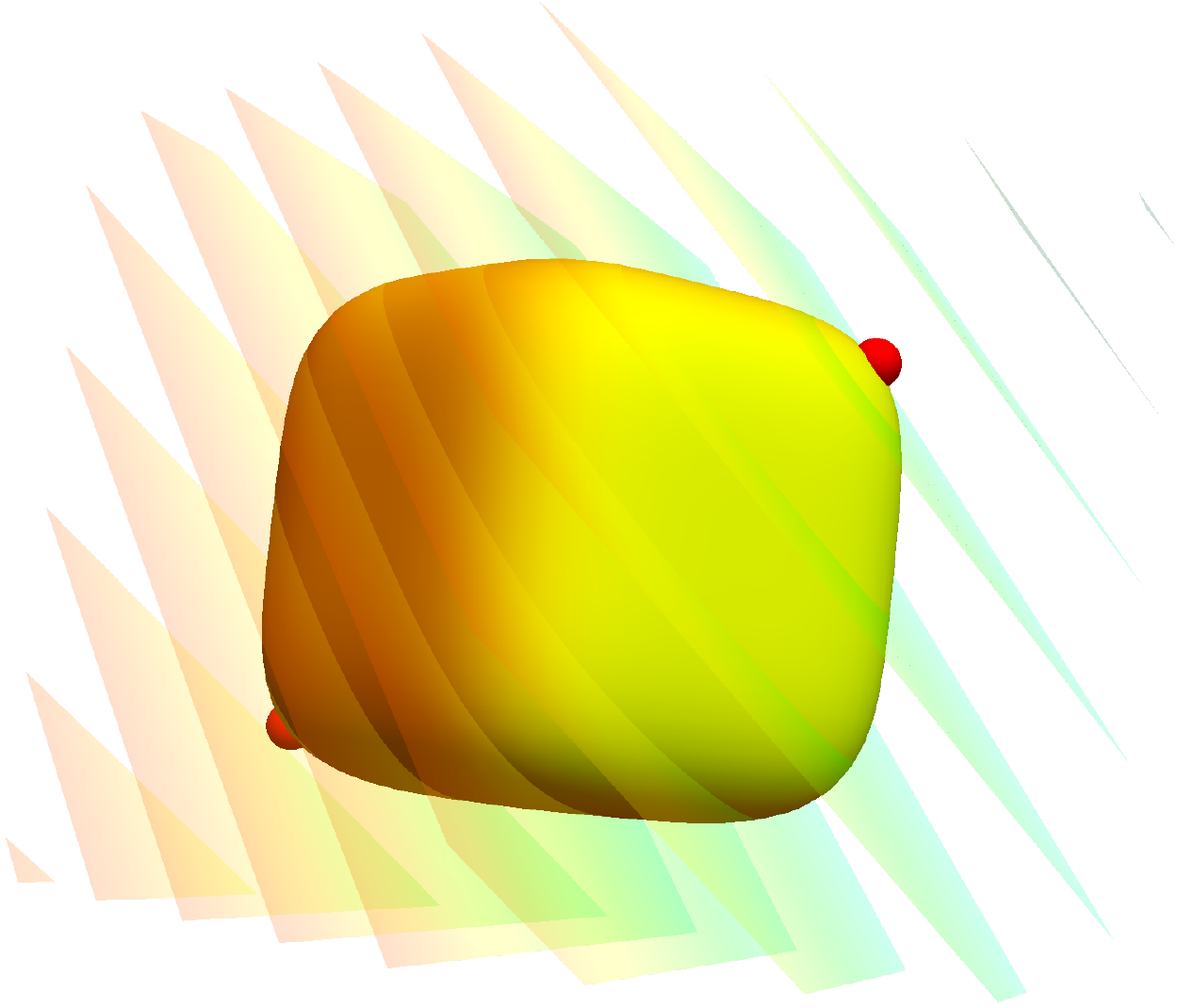}}
\label{hopf}
\caption{
For spheres, there are functions with exactly two Poincar\'e-Hopf critical points.
}
\end{figure}

\begin{figure}[!htpb]
\scalebox{0.6}{\includegraphics{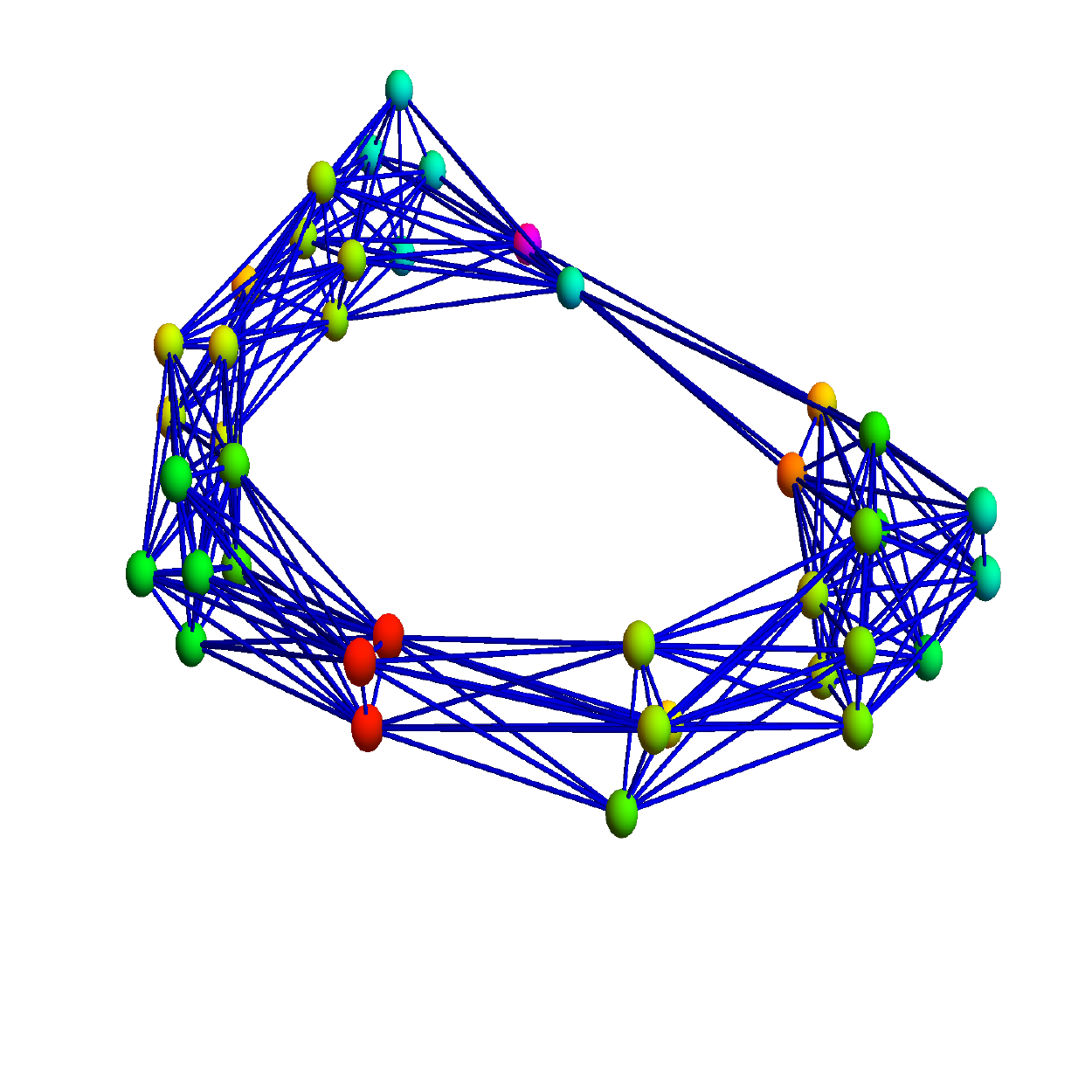}}
\scalebox{0.6}{\includegraphics{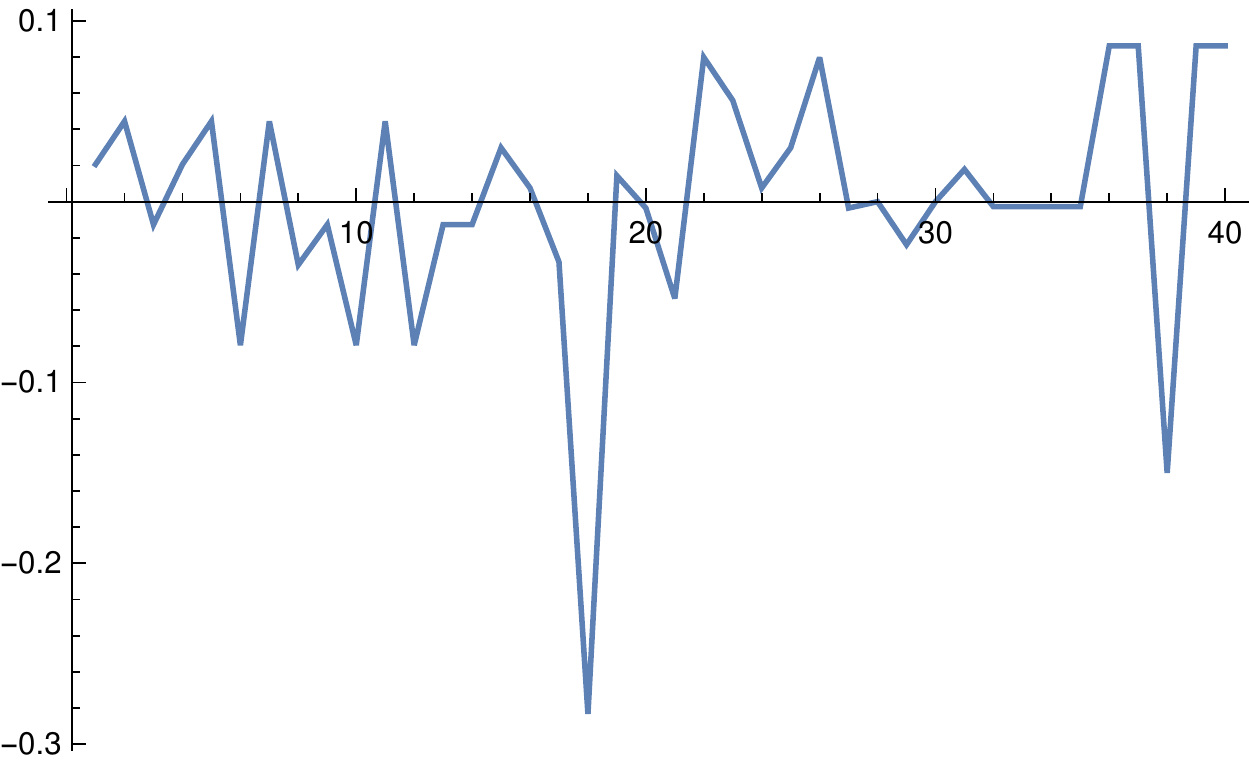}}
\label{nonstandardcircle}
\caption{
The curvature of a nonstandard manifold $M$ is the Euler curvature. We see
a picture illustrating a non-standard circle. The figure to the right
shows the curvatures. They are very small. If the vertex set is a finite
set containing all standard elements of $M$, then the standard part of 
the curvature of the graph is the Euler curvature of the manifold. It is zero
for odd dimensional manifolds. 
}
\end{figure}

\bibliographystyle{plain}

\begin{thebibliography}{10}

\bibitem{Banchoff1967}
T.~Banchoff.
\newblock Critical points and curvature for embedded polyhedra.
\newblock {\em J. Differential Geometry}, 1:245--256, 1967.

\bibitem{CheegerMuellerSchrader}
J.~Cheeger, W.~{M\"uller}, and R.~Schrader.
\newblock On the curvature of piecewise flat spaces.
\newblock {\em Comm. Math. Phys}, 92(3):405--454, 1984.

\bibitem{Connes}
A.~Connes.
\newblock {\em Noncommutative geometry}.
\newblock Academic Press, 1994.

\bibitem{Misner}
K.S.~Thorne C.W.~Misner and J.A. Wheeler.
\newblock {\em Gravitation}.
\newblock Freeman, San Francisco, 1973.

\bibitem{Cycon}
H.L. Cycon, R.G.Froese, W.Kirsch, and B.Simon.
\newblock {\em {Schr\"odinger} Operators---with Application to Quantum
  Mechanics and Global Geometry}.
\newblock Springer-Verlag, 1987.

\bibitem{hopf26}
H.~Hopf.
\newblock {\"Uber die Curvatura integra geschlossener Hyperflaechen}.
\newblock {\em Mathematische Annalen}, 95:340--367, 1926.

\bibitem{hopf35}
H.~Hopf.
\newblock {\"Uber die Drehung der Tangenten und Sehnen ebener Curven}.
\newblock {\em Compositio Math., 2}, pages 50--62, 1935.

\bibitem{cherngaussbonnet}
O.~Knill.
\newblock A graph theoretical {Gauss-Bonnet-Chern} theorem.
\newblock {\\}http://arxiv.org/abs/1111.5395, 2011.

\bibitem{poincarehopf}
O.~Knill.
\newblock A graph theoretical {Poincar\'e-Hopf} theorem.
\newblock {\\} http://arxiv.org/abs/1201.1162, 2012.

\bibitem{indexformula}
O.~Knill.
\newblock An index formula for simple graphs \hfill.
\newblock {\\}http://arxiv.org/abs/1205.0306, 2012.

\bibitem{indexexpectation}
O.~Knill.
\newblock On index expectation and curvature for networks.
\newblock {\\}http://arxiv.org/abs/1202.4514, 2012.

\bibitem{DiracKnill}
O.~Knill.
\newblock The {D}irac operator of a graph.
\newblock {{\\}http://http://arxiv.org/abs/1306.2166}, 2013.

\bibitem{eveneuler}
O.~Knill.
\newblock The {E}uler characteristic of an even-dimensional graph.
\newblock {{\\}http://arxiv.org/abs/1307.3809}, 2013.

\bibitem{dehnsommervillegaussbonnet}
O.~Knill.
\newblock Dehn-{S}ommerville from {G}auss-{B}onnet.
\newblock {\\}https://arxiv.org/abs/1905.04831, 2019.

\bibitem{parametrizedpoincarehopf}
O.~Knill.
\newblock A parametrized {P}oincare-{H}opf theorem and clique cardinalities of
  graphs.
\newblock {\\}https://arxiv.org/abs/1906.06611, 2019.

\bibitem{PoincareHopfVectorFields}
O.~Knill.
\newblock Poincar{\'e}-hopf for vector fields on graphs.
\newblock {\\}https://arxiv.org/abs/1911.04208, 2019.

\bibitem{EssentialNash}
Harold~W. Kuhn and Sylvia Nasar.
\newblock {\em The Essential {N}ash}.
\newblock Princeton University Press, 2002.

\bibitem{Nelson77}
E.~Nelson.
\newblock Internal set theory: A new approach to nonstandard analysis.
\newblock {\em Bull. Amer. Math. Soc}, 83:1165--1198, 1977.

\bibitem{Nelson}
E.~Nelson.
\newblock {\em Radically elementary probability theory}.
\newblock Princeton university text, 1987.

\bibitem{poincare85}
H.~{Poincar\'e}.
\newblock Sur les courbes definies par les equation differentielle {III}.
\newblock {\em Journal de Mathematique pures et appliqu\'ees}, pages 167--244,
  1885.

\bibitem{Regge}
T.~Regge.
\newblock General relativity without coordinates.
\newblock {\em Nuovo Cimento (10)}, 19:558--571, 1961.

\bibitem{Spivak1999}
M.~Spivak.
\newblock {\em A comprehensive Introduction to Differential Geometry I-V}.
\newblock Publish or Perish, Inc, Berkeley, third edition, 1999.

\end{thebibliography}

\end{document}